\newcommand{\ig}[4]{
    \begin{figure}[!ht]\begin{center}%
    \includegraphics[height = #2\textheight]{pictures//#1}\caption{#4}\label{#3}%
    \end{center}\end{figure}
}
\newtheorem{theorem}{Theorem}
\newtheorem{lemma}[theorem]{Lemma}
\newtheorem{consequence}[theorem]{Consequence}
\newtheorem{comm}[theorem]{Remark}
\newtheorem{sentence}[theorem]{Proposition}
\newtheorem{construction}[theorem]{Construction}
\theoremstyle{definition}
\newtheorem{define}{Definition}
\newtheorem{design}[define]{Notation}
\setlist[enumerate]{label = \rm {(\arabic*)}}
\newcommand{\N}{\mathbb{N}}
\newcommand{\R}{\mathbb{R}}
\renewcommand{\L}{\mathbb{L}}
\newcommand{\w}{\omega}
\newcommand{\sumi}[1]{\underset{i = 1}{\overset{#1}{\sum}}}
\newcommand{\x}{\times}
\newcommand{\va}{\tilde{a}}
\newcommand{\vb}{\tilde{b}}
\newcommand{\vA}{\tilde{A}}
\newcommand{\vB}{\tilde{B}}
\newcommand{\eps}{\varepsilon}
\newcommand{\ca}[1]{\mathcal{#1}}
\newcommand{\smplx}[1]{\triangle_{#1}}
\newcommand{\dis}{\operatorname{dis}}
\newcommand{\dH}{\operatorname{d_H}}
\newcommand{\dGH}{\operatorname{d_{GH}}}
\newcommand{\diam}{\operatorname{diam}}
\newcommand{\s}{\operatorname{s}}
\renewcommand{\t}{\operatorname{t}}
\newcommand{\e}{\operatorname{e}}
\newcommand{\id}{\operatorname{id}}
\newcommand{\neww}{\operatorname{\underline{new}}}
\newcommand{\oldd}{\operatorname{\underline{old}}}
\newcommand{\rightt}{\operatorname{\underline{right}}}
\newcommand{\leftt}{\operatorname{\underline{left}}}
\newcommand{\farr}{\operatorname{\underline{far}}}
\newcommand{\farleftt}{\operatorname{\underline{far~left}}}
\newcommand{\farrightt}{\operatorname{\underline{far~right}}}
\newcommand{\nearestt}{\operatorname{\underline{nearest}}}
\newcommand{\closestt}{\operatorname{\underline{closest}}}
\newcommand{\Leftt}{\operatorname{\underline{Left}}}
\renewcommand{\:}{\colon}
\title{Density of generic metric spaces in the Gromov--Hausdorff class.}
\author{Vikhrov Anton}
\date{}
\begin{document}
    \maketitle
    \begin{abstract}
        In this paper we prove that generic metric spaces are everywhere dense in the proper class of all metric spaces endowed with the Gromov--Hausdorff distance.
    \end{abstract}
    \section{Introduction}
        A symmetric mapping $d\colon X \times X \to [0,\infty]$ that vanishes on the diagonal and satisfies the triangle inequality is called a \emph{generalized pseudometric}. If, in addition, the function is equal to zero only on the diagonal, then it is called \emph{generalized metric}, and if it does not take infinite values, then \emph{metric}.\par
        The Gromov--Hausdorff distance is a value reflecting the degree of difference between two metric spaces. This distance was introduced by Gromov in 1981~\cite{GROMOV} and was defined as the smallest Hausdorff distance between isometric images of given spaces. With the help of this distance, Gromov investigated the properties of groups of polynomial growth. An equivalent definition of this distance was later given.\par
        In this paper, we are using the von Neumann--Bernays--Gödel system of axioms, which introduces the so-called classes and proper classes that generalize the concept of a set. The collection of all metric spaces considered up to isometry is a proper class and is denoted by $\mathcal{GH}$.\par
        It is well known that the Gromov--Hausdorff distance is a generalized pseudometric on $\mathcal{GH}$. In~\cite{ISO} the notion of a space in general position in $\mathcal{GH}$ is introduced and it is shown that such spaces are dense in the space $\mathcal{M}$ of compact non-empty metric spaces considered up to isometry, and the structure of small neighborhoods of a generic space in $\mathcal{GH}$ is studied. These facts imply the triviality of the isometry group of the space $\mathcal{M}$. In this paper we prove that generic spaces is everywhere dense subfamily in $\mathcal{GH}$.\par
        The author expresses his gratitude to his supervisor, Dr. Sci. Professor A.A.Tuzhilin, as well as Dr. Sci. Professor A.O.Ivanov for posing the problem and attention to the work.
    \section{Main definitions and preliminary results}
First we introduce some basic notation. We denote by $\R_{\geq 0}$ the set of non-negative real numbers, and by $\R_{+}$ the set of positive real numbers.
Let $(X, \rho)$ be an arbitrary metric space, and $x,y \in X$. The distance between the points $x$ and $y$ is denoted by $|xy| = \rho(x,y) = d_X(x,y)$. Let $U_{\eps}(a)$ be an open ball with center $a$ of radius $\eps$, and
$U_{\eps}(A)~ = ~\underset{a \in A}{\bigcup}~U_{\eps}(a)$ be a~$\eps$-neighborhood of a non-empty subset $ A$, and $S_{\eps}(a)$ is a sphere of radius $\eps$ centered at the point~$a$.
We denote by $\#X$ the cardinality of $X$, and for any $a \in \R_{\geq0}$ and metric space $X$ we put $a\,X = (X, a\,d_{X})$.\\
\begin{define}
Let $A,B$ be non-empty subsets of a metric space $X$. \emph{The Hausdorff distance} is the value
\begin{equation*}
\dH(A,B) = \underset{r \in \R_{\geq 0}}{\inf} \Bigl\{r\: A \subseteq U_r(B)\, \&\, B \subseteq U_r (A)\Bigr\}.
\end{equation*}
\end{define}

\begin{define}
Let $A,B,X$ be metric spaces. If $A$ is isometric to $\vA$ and $B$ is isometric to $\vB$, where $\vA$ and $\vB$ are subspaces of $X$, then the triple $(\vA,\vB,X) $ we call \emph{realization of the pair} $(A,B)$.
\end{define}

\begin{define}
\emph{The Gromov--Hausdorff distance} between two metric spaces $A$, $B$ is the infimum of the Hausdorff distances among all realizations of the pair $(A,B)$. In other words,\\
\begin{equation*}
\dGH(A,B) = \inf\bigl\{r: \text{there is a realization } (\vA,\vB,X) \text{ of the pair ($A,B$) such that }d_{H} (\vA,\vB) \leq r \bigr\}.
\end{equation*}
\end{define}

\begin{define}
\emph{A correspondence} between two sets $A$ and $B$ is a subset $R \subseteq A \x B$ such that for any $a \in A$ and $b \in B$ there exist $\va \in A$ and $\vb \in B$ for which $(a,\vb)$, $(\va, b)$ belong to $R$.
\end{define}
Further, $aRb$ means that $ a $ and $b$ are in correspondence $R$, and the set of all correspondences between metric spaces $A$, $B$ is denoted as $\ca{R}(A,B) $.

\begin{define}
Let $R$ be a correspondence between metric spaces $A$, $B$. Its \emph{distortion} is given by
\begin{equation*}
\dis R = \sup \Bigl\{\bigl|d_X(a,a')-d_Y(b,b')\bigr|:\, a R b\, \text{ and } \,a' R b '\Bigr\}.
\end{equation*}
\end{define}
\begin{comm}
If $R$ is a functional correspondence, that is, there exists a mapping $f: X \to Y$ such that $xRy$ if and only if $ y = f(x)$, then its distortion can be written in the form
\begin{equation*}
\dis f = \sup \Bigl\{\bigl|d_X(a,a')-d_Y(f(a),f(a'))\bigr|,\, a,a' \in X \Bigr\}.
\end{equation*}
\end{comm}
\begin{sentence}[\cite{BUR}]\label{theorem:main_formula}
For any metric spaces $A$ and $B$, the following equality holds\/$\:$
\begin{equation*}
2\dGH(A,B) = \underset{R \in \ca{R}(A,B)}{\inf} \dis R.
\end{equation*}
\end{sentence}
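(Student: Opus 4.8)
The plan is to prove the two inequalities separately, exploiting the standard duality between realizations of the pair $(A,B)$ and correspondences $R \in \ca{R}(A,B)$.

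First I would establish $\underset{R}{\inf}\,\dis R \leq 2\dGH(A,B)$, which is the easier direction. Take any realization $(\vA,\vB,X)$ of the pair $(A,B)$ and identify $A$ and $B$ with the subspaces $\vA,\vB \subseteq X$; suppose $\dH(\vA,\vB) \leq r$. Define
\begin{equation*}
R = \bigl\{(a,b) \in A \x B : |ab| \leq r\bigr\},
\end{equation*}
with distances computed in $X$. The Hausdorff condition guarantees that every $a \in A$ lies within $r$ of some $b \in B$ and symmetrically, so $R$ is indeed a correspondence. For any $(a,b),(a',b') \in R$, the quadrilateral (triangle) inequality in $X$ gives $\bigl||aa'|-|bb'|\bigr| \leq |ab| + |a'b'| \leq 2r$, hence $\dis R \leq 2r$. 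Passing to the infimum over all realizations yields the claimed bound.

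Second, the harder direction $2\dGH(A,B) \leq \underset{R}{\inf}\,\dis R$, I would prove by constructing an admissible metric from a given correspondence. Fix $R \in \ca{R}(A,B)$ and put $r = \tfrac{1}{2}\dis R$. On the disjoint union $A \sqcup B$ keep the metrics $d_A$ and $d_B$ on the respective parts, and for $a \in A$, $b \in B$ set
\begin{equation*}
d(a,b) = \underset{(a',b') \in R}{\inf}\bigl(d_A(a,a') + r + d_B(b',b)\bigr).
\end{equation*}
I would then verify that $d$ is a generalized pseudometric extending $d_A$ and $d_B$, pass to its metric quotient to obtain a genuine metric space $X$ into which $A$ and $B$ embed isometrically (no collapse occurs within either part, since $d$ restricts to $d_A$ and $d_B$), and check $\dH(\vA,\vB) \leq r$: for each $a$ there is some $(a,b) \in R$ giving $d(a,b) \leq r$, and symmetrically. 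This shows $\dGH(A,B) \leq \tfrac{1}{2}\dis R$ for every $R$, whence the inequality after taking the infimum.

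The main obstacle is the triangle inequality for $d$, specifically the mixed case of a path $A \to B \to A$ (and the symmetric one), where one must show $d(a,b) + d(b,a'') \geq d_A(a,a'')$. This is precisely where the distortion bound is used: choosing near-optimal pairs $(a_1,b_1),(a_2,b_2) \in R$ realizing the two infima, one combines $d_B(b_1,b) + d_B(b,b_2) \geq d_B(b_1,b_2) \geq d_A(a_1,a_2) - 2r$ (the last step from $\dis R = 2r$), so that the two additive $r$-terms absorb the $-2r$ and the whole chain collapses to $d_A(a,a'')$ by the triangle inequality in $A$. The purely one-sided mixed cases reduce directly to the triangle inequality in $A$ or $B$, and the handling of the infimum (working with $\dis R + \eps$, or simply noting the per-correspondence bound) is routine.
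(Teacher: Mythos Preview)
The paper does not prove this proposition; it is quoted from \cite{BUR} without argument. Your outline reproduces the standard proof given in that reference and is essentially correct.
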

\begin{define}
\emph{The Gromov--Hausdorff class} $\mathcal{GH}$ is a proper class (in terms of von~Neumann--Bernays--Gödel set theory) of all metric spaces considered up to isometry.
\end{define}~\begin{sentence}[\cite{BUR}]
The Gromov--Hausdorff distance is a generalized pseudometric on $\mathcal{GH}$.
\end{sentence}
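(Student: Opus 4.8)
The plan is to verify the four defining properties of a generalized pseudometric in turn: that $\dGH$ takes values in $[0,\infty]$, that it vanishes on the diagonal, that it is symmetric, and that it satisfies the triangle inequality. The first three are essentially formal. Since the Hausdorff distance is by definition a nonnegative quantity (possibly $+\infty$), its infimum over all realizations again lies in $[0,\infty]$, so $\dGH$ has the correct target. For vanishing on the diagonal, I would observe that if $A$ and $B$ are isometric then, identifying them via an isometry, we obtain a realization $(\vA,\vB,X)$ with $\vA = \vB$, whence $\dH(\vA,\vB) = 0$ and therefore $\dGH(A,B) = 0$; in particular $\dGH(A,A) = 0$ for every isometry class $A$, which is all that the diagonal condition requires. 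Symmetry follows because the notion of a realization of $(A,B)$ is itself symmetric — the triple $(\vA,\vB,X)$ is a realization of $(A,B)$ if and only if $(\vB,\vA,X)$ is a realization of $(B,A)$ — and the Hausdorff distance is symmetric, so the two infima defining $\dGH(A,B)$ and $\dGH(B,A)$ coincide.

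The substantive part is the triangle inequality $\dGH(A,C) \le \dGH(A,B) + \dGH(B,C)$, and here I would argue not with realizations directly but through the correspondence formula of Proposition~\ref{theorem:main_formula}, which rewrites $2\dGH$ as the infimum of the distortion over all correspondences. If either $\dGH(A,B)$ or $\dGH(B,C)$ equals $+\infty$ the inequality is trivial, so I would reduce to the case where both are finite. Given correspondences $R_1 \in \ca{R}(A,B)$ and $R_2 \in \ca{R}(B,C)$, the idea is to form their composition
\begin{equation*}
R = \bigl\{(a,c) \in A \x C : \text{there exists } b \in B \text{ with } (a,b) \in R_1 \text{ and } (b,c) \in R_2\bigr\}.
\end{equation*}
One first checks that $R$ is a correspondence between $A$ and $C$: given any $a \in A$, choose $b$ with $(a,b) \in R_1$ and then $c$ with $(b,c) \in R_2$, so that $(a,c) \in R$, and symmetrically for any $c \in C$; this is exactly the defining condition of a correspondence.

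The key estimate is $\dis R \le \dis R_1 + \dis R_2$. Indeed, for any pairs $(a,c),(a',c') \in R$ there are intermediate points $b,b' \in B$ with $(a,b),(a',b') \in R_1$ and $(b,c),(b',c') \in R_2$, and the triangle inequality for real numbers gives
\begin{equation*}
\bigl|d_A(a,a') - d_C(c,c')\bigr| \le \bigl|d_A(a,a') - d_B(b,b')\bigr| + \bigl|d_B(b,b') - d_C(c,c')\bigr| \le \dis R_1 + \dis R_2.
\end{equation*}
Taking the supremum over all such pairs yields $\dis R \le \dis R_1 + \dis R_2$, hence $2\dGH(A,C) \le \dis R_1 + \dis R_2$ by Proposition~\ref{theorem:main_formula}. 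Passing to the infimum over $R_1$ and over $R_2$ independently and invoking the formula once more gives $2\dGH(A,C) \le 2\dGH(A,B) + 2\dGH(B,C)$, which is the desired inequality. The one place demanding care is this passage to the infimum in the presence of possibly infinite values; having already reduced to the case of finite $\dGH(A,B)$ and $\dGH(B,C)$, the two infima are approached by correspondences of finite distortion, and the additive composition estimate is stable under taking these separate infima, so no difficulty arises.
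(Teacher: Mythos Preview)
Your argument is correct. The paper does not supply its own proof of this proposition; it is stated with a citation to~\cite{BUR} and used as a black box, so there is nothing in the paper to compare against directly.

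As a minor remark on approach: the proof in~\cite{BUR} establishes the triangle inequality by a gluing construction on realizations---given realizations $(\vA,\vB_1,X_1)$ of $(A,B)$ and $(\vB_2,\widetilde{C},X_2)$ of $(B,C)$, one glues $X_1$ and $X_2$ along the isometric copies $\vB_1 \cong \vB_2$ of $B$ to obtain a common ambient space for $A$ and $C$, and then invokes the triangle inequality for the Hausdorff distance. Your route through Proposition~\ref{theorem:main_formula} and the composition of correspondences is an equally standard alternative; it trades the topological gluing for a purely combinatorial estimate on distortions, which is arguably cleaner and avoids any discussion of admissible metrics on disjoint unions. Either method yields the result with the same level of effort.
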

Denote by $\smplx{n}$ an $n$-point \emph{simplex}, that is, a metric space of cardinality $n$, such that the distances between  its different points are equal to $1$.
The diameter of a metric space $X$ is
\begin{equation*}
\diam(X) = \underset{x, x' \in X}{\sup}d_X(x,x').
\end{equation*}~\begin{sentence}[\cite{BUR}]~\label{lemma:dist_to_smplx}
For any metric space $X$, the formula $2\dGH(\smplx{1},X) = \diam(X)$ is valid.
\end{sentence}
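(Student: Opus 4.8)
The plan is to reduce the statement directly to the correspondence formula of Proposition~\ref{theorem:main_formula}, exploiting the fact that $\smplx{1}$ consists of a single point, which makes the set of correspondences essentially trivial. First I would fix notation by writing $\smplx{1} = \{p\}$ and describing $\ca{R}(\smplx{1}, X)$ explicitly. By the definition of a correspondence, for every $x \in X$ there must exist $\va \in \smplx{1}$ with $(\va, x) \in R$; since $p$ is the only element of $\smplx{1}$, this forces $(p,x) \in R$ for all $x \in X$. Conversely, the full product $R_0 = \{p\} \x X$ clearly satisfies the defining condition of a correspondence. Hence $\ca{R}(\smplx{1}, X) = \{R_0\}$ is a singleton, and the infimum in the formula is taken over a single element.

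Next I would compute the distortion of $R_0$. Any two related pairs have the form $(p, x)$ and $(p, x')$ with $x, x' \in X$, so the defining quantity in $\dis R_0$ is
\begin{equation*}
\bigl|d_{\smplx{1}}(p,p) - d_X(x,x')\bigr| = \bigl|0 - d_X(x,x')\bigr| = d_X(x,x').
\end{equation*}
Taking the supremum over all $x, x' \in X$ gives $\dis R_0 = \diam(X)$. Substituting into Proposition~\ref{theorem:main_formula} then yields
\begin{equation*}
2\dGH(\smplx{1}, X) = \inf_{R \in \ca{R}(\smplx{1}, X)} \dis R = \dis R_0 = \diam(X),
\end{equation*}
which is exactly the claimed identity.

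There is no genuine obstacle in this argument; it is a short and direct application of the correspondence formula. The only points deserving a line of care are the verification that the correspondence $R_0$ is the unique one (so that the infimum is attained by it trivially and no further minimization is needed), and consistency in the case where $X$ is unbounded: there $\diam(X) = +\infty$ and likewise $\dis R_0 = +\infty$, so the equality continues to hold as an identity of extended nonnegative reals, in agreement with the convention that the Gromov--Hausdorff distance is a generalized pseudometric taking values in $[0,\infty]$.
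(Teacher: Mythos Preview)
Your argument is correct and is the standard proof of this identity via the correspondence formula. Note that the paper does not actually supply its own proof of this proposition: it is quoted from~\cite{BUR} without argument, so there is nothing to compare against; your derivation is exactly the one usually given and would serve perfectly well as the missing justification.
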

\begin{design}
Let $X \in \mathcal{GH}$. Denote by $S(X)$ the set of all bijective mappings of $X$ onto itself. We put
\begin{gather*}
\s(X) = \inf\bigr\{|xx'|: x \neq x'; x,x' \in X\bigl\},\\
\t(X) = \inf\bigr\{|xx'|+|x'x''|-|xx''|: x \neq x' \neq x'' \neq x; x,x',x'' \in X\bigl\},\\
\e(X) = \inf\bigr\{\dis(f), f \in S(X), f \neq \id\bigl\}.
\end{gather*}
\end{design}
\begin{define}
A metric space $X$ is called \emph{a generic space} if all three quantities $\s(X)$, $ \e(X)$, $\t(X)$ are positive.
\end{define}
    \subsection{Canonical projection}
Recall how to construct a pseudometric space from a connected weighted graph. Everywhere below, graphs are assumed to be simple, connected, and weighted, and the edge weight function (given on the edges of the graph) is non-negative. The sets of vertices of the graphs and the sets of edges can be infinite. The vertices of the graphs are sometimes called their vertices. The edge connecting $x$ and $y$ is denoted by $xy$ or $x \sim y$.
\begin{define}
\emph{A generalized walk} $L$ in the a graph $G$ connecting its points $x$ and $y$ is a finite sequence $\bigl[x_i, i = 1 \dots N\mid x_1 = x, x_N = y\bigr]$, $N \geq 2$, such that either $x_i x_{i+1}$ is an edge for all $i$, or $x_i = x_{i+1}$.\footnote{We use square brackets in order to make this text easier to read.} \emph{An edge of a generalized walk} is an edge connecting successive distinct points of this walk. \emph{The length of the walk} $L$ is defined as $|L|$ = $\sumi{N-1}\w(x_i x_{i+1})$. \emph{The set of generalized walks} connecting $x$ and $y$ is denoted by $\L(x,y)$.
\end{define}
In this paper, \textbf{a generalized walk is simply called a walk}
\begin{define}\label{def:kanon_proj}
Let us call the mapping $\pi$, which assigns to each connected weighted graph $X = (V,E, \w)$ the metric space $(V,d_{\w})$, \emph{canonical}, where
\begin{equation*}
d_{\w}(y_1,y_2) = \begin{cases}
\inf\bigl\{|L|, L \in \L(y_1,y_2) \bigr\}& \text{for } y_1\neq y_2,\\
0& \text{for } y_1 = y_2.
\end{cases}
\end{equation*}
We say that such projection \emph{preserves the edge weights} if $\w(xy) = d_{\w}(x,y)$ is true for any $xy \in E$.
\end{define}
It is well known that $(Y, d_{\w})$ is a pseudometric space.
\begin{comm}\label{comm:is_metric_space}
Let $X = (V, E, \w)$ be a graph. If there exists $ C > 0$ such that $\w(e) \geq C$ for all $\e \in E$, then $\pi(X)$ is a metric space.
\end{comm}
\begin{define}
Let $X$ be a weighted graph and $z_1 z_2$ its edge. Then \emph{the polygon inequality for the lower base} $z_1 z_2$ and the walk $L \in \L(z_1,z_2)$ is the inequality $\w(z_1 z_2) \leq |L|$.
\end{define}
\begin{lemma}\label{lemma:sohrani}
The canonical projection preserves edge weights if and only if all polygon inequalities hold for all lower bases $xy$ $\in$ $E$ and any $L \in \L(x,y)$.
\end{lemma}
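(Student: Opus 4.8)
The plan is to reduce the biconditional to a single elementary observation about infima, so that both directions fall out simultaneously. The central remark I would record first is that for any edge $xy \in E$, the two-term sequence $[x, y]$ is itself an element of $\L(x,y)$: indeed $N = 2$, $x_1 = x$, $x_2 = y$, and $x_1 x_2 = xy$ is an edge by hypothesis, so this is a legitimate walk, and its length is exactly $\w(xy)$. Consequently the inequality $d_\w(x,y) \leq \w(xy)$ holds unconditionally, with no assumptions whatsoever, simply because the infimum defining $d_\w(x,y)$ is taken over a set of values that contains $\w(xy)$.

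Granting this, the equality $\w(xy) = d_\w(x,y)$ that defines weight preservation becomes equivalent to the reverse inequality $\w(xy) \leq d_\w(x,y) = \inf\bigl\{|L| : L \in \L(x,y)\bigr\}$. At this point I would invoke the standard characterization of the infimum: a real number $c$ satisfies $c \leq \inf S$ if and only if $c$ is a lower bound of $S$, that is, $c \leq s$ for every $s \in S$. Applying this with $c = \w(xy)$ and $S = \bigl\{|L| : L \in \L(x,y)\bigr\}$ shows that $\w(xy) \leq d_\w(x,y)$ holds precisely when $\w(xy) \leq |L|$ for every walk $L \in \L(x,y)$, which is exactly the family of polygon inequalities for the lower base $xy$.

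To assemble the two implications, I would argue as follows. For the forward direction, weight preservation gives $\w(xy) = d_\w(x,y) \leq |L|$ for each $L \in \L(x,y)$, so every polygon inequality holds. For the converse, if all polygon inequalities hold then $\w(xy)$ is a lower bound of $S$, whence $\w(xy) \leq d_\w(x,y)$; combined with the automatic bound $d_\w(x,y) \leq \w(xy)$ from the first paragraph, this yields $\w(xy) = d_\w(x,y)$. Quantifying over all edges $xy \in E$ then completes the equivalence.

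I expect no genuine obstacle here: the whole argument is a direct unwinding of the definition of $d_\w$ together with the infimum characterization. The one point deserving a moment's care is the verification that the single-edge walk is a bona fide element of $\L(x,y)$, so that $d_\w(x,y) \leq \w(xy)$ is truly unconditional. This is what makes the polygon inequalities the exact obstruction to weight preservation, rather than merely a sufficient condition, and it is the only place where the precise form of the definition of a walk is used.
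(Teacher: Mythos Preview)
Your proof is correct and follows essentially the same line as the paper's: both arguments use that the single-edge walk $[x,y]$ lies in $\L(x,y)$ to obtain $d_\w(x,y)\le\w(xy)$ unconditionally, and then identify the polygon inequalities as precisely the condition needed for the reverse bound $\w(xy)\le d_\w(x,y)$. Your write-up is more explicit about the infimum characterization and states the converse direction directly rather than by contrapositive, but the substance is identical.
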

\begin{proof}
Note that $\inf \bigl\{|L|: L \in \L(x,y) \bigr\} \leq \w(xy)$ because $xy \in \L(x, y)$. Due to the polygon inequality with the lower base $xy$, we have $\w(xy) \leq |L|$ for every $L \in \L(x,y)$. \par
If the polygon inequality does not hold for at least one pair of points $x,y$, i.e., there is a walk $L \in \L(x,y)$ such that $\w(xy) > |L|$, then $d_{\w}(x,y) < \w(xy)$.
\end{proof}

    \subsection{Subdivision of a metric space.}
Let us generalize the notion of graph subdivision.
\begin{construction}~\label{constr:podr}
Consider an arbitrary metric space $X$ as a complete weighted graph $G'$ with the weight function $\w$ equal to the distance between the points. For each pair of points $\{$$u$, $v$$\}$, add the points $\alpha^{u,v}_i$, $i \in \mathcal{I}(u,v)$ to the graph $G'$. Connect each $\alpha^{u,v}_i$ to each $\alpha^{u,v}_j$, and connect the points $u$, $v$ to all $\alpha^{u,v}_i$. To the added edges we assign arbitrarily weights in such a way that the triangle inequalities hold in all the subgraphs $G_{u,v}$ generated by $\{u,v,\alpha^{u,v}_{i},\/i \in I(u,v)\}$.  We denote the obtained graph by $G$.
\end{construction}
We put $Z = \pi(G)$, and the points obtained from $X$ is denoted in the same way as in $X$. Let us write some properties of the space $Z$.
\begin{lemma}~\label{lemma:podr}
\begin{enumerate}
\item The projection $\pi$ preserves the weights of all edges.
\item The distance between points $x,y$ located in $G_{u,v}$ and $G_{u',v'}$, respectively, where $uv \neq u'v'$ and $x$, $ y \notin X$, is equal to the minimal length of the following walks$\:$
\begin{enumerate}
\item $L_1$ $ = $ $[x,u,u',y]$,
\item $L_2$ $ = $ $[x,u,v',y]$,
\item $L_3$ $ = $ $[x,v,u',y]$,
\item $L_4$ $ = $ $[x,v,v',y]$.
\end{enumerate}
\item The distance from a vertex x of $G_{u,v}$, where $v \neq x \neq u$, to $u' \in X$, where $v \neq u' \neq u$ is equal to the minimal length of the following walks$\:$
\begin{enumerate}
\item $L_1$ $ = $ $[x,u,u']$,
\item $L_2$ $ = $ $[x,v,u']$.
\end{enumerate}
\end{enumerate}
\end{lemma}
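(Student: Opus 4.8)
The governing structural feature of the graph $G$ from Construction~\ref{constr:podr} is that every added vertex $\alpha^{u,v}_i$ is adjacent only to vertices of its own block $G_{u,v}$; consequently the old vertices (the points of $X$) are the only gateways between blocks, and each block $G_{u,v}$ is attached to the rest of $G$ solely through $u$ and $v$. The plan is to turn this into a decomposition principle for walks: given any $L\in\L(p,q)$, cut it at the old vertices it visits. Each maximal run of added vertices lies in a single block $G_{s,t}$ and is flanked by old vertices lying in $\{s,t\}$; by the triangle inequalities built into $G_{s,t}$ this run is no shorter than the direct weight $\w(st)=|st|$ (or than $0$, if the two flanks coincide). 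Two auxiliary facts will be used repeatedly: (i) inside one block, where all triangle inequalities hold by construction, the direct edge is a shortest walk between any two of its vertices, by a routine induction on the number of steps; and (ii) starting from an added vertex of $G_{u,v}$, the first old vertex a walk meets is forced to be $u$ or $v$.

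For statement (1) I will invoke Lemma~\ref{lemma:sohrani} and verify every polygon inequality. For an old edge $uv$, apply the cutting principle to $L\in\L(u,v)$: replace every maximal added run by the direct old edge between its two flanking gateways, obtaining a walk $\hat L$ that lives in the original complete graph on $X$ with $|\hat L|\le|L|$; the metric triangle inequality on $X$, iterated, then gives $|\hat L|\ge|uv|$, whence $\w(uv)\le|L|$. For an edge inside a block $G_{u,v}$, split $L$ as the segment from $p$ to the first old vertex $w$, then the old-to-old middle, then the segment from the last old vertex $w'$ to $q$; by (ii) we have $w,w'\in\{u,v\}$, the flanking segments cost at least $\w(pw)$ and $\w(w'q)$ by (i), and the middle costs at least $|ww'|=\w(ww')$ by the old-edge case just proved. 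One more application of the block triangle inequalities, $\w(pq)\le\w(pw)+\w(ww')+\w(w'q)$, closes the argument; the degenerate case in which $L$ never leaves $G_{u,v}$ is immediate from (i).

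Statements (2) and (3) are then obtained by the same decomposition, now tracked quantitatively. The upper bounds are free: each listed walk $L_k$ uses only genuine edges of $G$ (block edges at the two ends and an old edge in the middle), and when the two chosen gateways coincide the middle edge degenerates harmlessly under the generalized-walk convention $x_i=x_{i+1}$, so $d_{\w}\le\min_k|L_k|$. For the lower bound in (2), take any $L\in\L(x,y)$; since $x$ and $y$ lie in different blocks the walk must visit old vertices, so let $w$ be its first old vertex and $w'$ its last. By (ii) we get $w\in\{u,v\}$ and $w'\in\{u',v'\}$, the initial and final in-block segments cost at least $\w(xw)$ and $\w(w'y)$ by (i), and the middle old-to-old portion costs at least $|ww'|$ by statement (1). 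Summing gives $|L|\ge\w(xw)+|ww'|+\w(w'y)$, which is exactly $|L_k|$ for the gateway pair $(w,w')$; minimizing over the four pairs yields $d_{\w}(x,y)\ge\min_k|L_k|$. Statement (3) is the same argument with a single gateway $w\in\{u,v\}$ on the $x$-side and the fixed old endpoint $u'$ on the other, where the hypothesis $u'\notin\{u,v\}$ guarantees that $u'$ is reached only after the middle portion.

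I expect the main obstacle to be making the cutting principle fully rigorous rather than merely plausible: one must check that consecutive added vertices always share a block, that the flanking old vertices are genuine gateways lying in $\{s,t\}$, and that the replacement of each run by a direct edge never increases length while landing the reduced walk inside the metric space $X$. Alongside this comes the bookkeeping for the coincidence cases ($w=w'$, or overlapping blocks sharing a gateway), where it is precisely the generalized-walk convention that keeps the listed walks $L_k$ legitimate and the estimates tight.
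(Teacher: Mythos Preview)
Your proposal is correct and follows essentially the same route as the paper: decompose an arbitrary walk at the old vertices it meets, use that added vertices of $G_{u,v}$ are adjacent only to vertices of $G_{u,v}$ so the gateways are forced to lie in $\{u,v\}$, bound each in-block segment by the direct edge via the block's triangle inequalities, and bound the old-to-old middle by the metric on $X$. The only cosmetic difference is in item~(1) for an edge $pq$ inside $G_{u,v}$: the paper argues that a walk leaving $G_{u,v}$ through $u$ must return through $v$ (implicitly passing to a simple walk) and then replaces the outside $u$--$v$ portion by the edge $uv$ to land back in $G_{u,v}$, whereas you cut at the first and last old vertices $w,w'\in\{u,v\}$ and finish with the in-block inequality $\w(pq)\le\w(pw)+\w(ww')+\w(w'q)$. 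Both arguments are the same idea packaged differently; your version has the advantage of not needing the tacit reduction to walks without repeated vertices.
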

\begin{proof}
Let us verify $(1)$. To prove it, we need to check all polygon inequalities and apply Lemma~\ref{lemma:sohrani}.\par
Consider a pair of points $u,v$, which were obtained from the space $X$. Any walk connecting them can be divided into segments $L_{i,j}$ that lie entirely in $G_{x_i,x_j}$ for some $x_i, x_j \in X$, and neighboring walks must intersect at points from $X$. The length of each such segment is no less than the one of $[x_i,x_j]$, which means that the infimum can be calculated by walks passing only through points from $X$, and each such walk is no shorter than $[u,v]$.\par
Consider an edge $xy$ that lies in $G_{u,v}$ and is distinct from $uv$. The length of any walk connecting $x$, $y$, which lies entirely in $G_{u,v}$, is no less than $ \w(xy)$ due to polygon inequalities. If the walk passes through the point $u$ and exits $G_{u,v}$, then it must pass through the point $v$, since the walk does not pass through the same point $u$ twice, and the graph $G \backslash \{u,v\}$ (subgraph $G$ spanned by all vertices of $G$ except $u,v$) is disconnected (and the walk connects points lying in different connected components). Any walk connecting $u,v$ is no shorter than $ [u,v]$ (see the proof of the first item). Hence, a walk that does not lie in $G_{u,v}$ is no shorter than some walk lying entirely in $G_{u,v}$, and this case was considered at the beginning of the proof.\par
Now let us prove item $(2)$. Consider an arbitrary walk $L$ connecting $x,y$, and let its first vertex in $X$ be $a_1$ and the last one be $ a_2 $(the walk must pass through points from $X$, since $G\backslash X$ is not a connected graph, and the points $x$, $y$ lie in different connected components). Any walk connecting $a_1$ and $a_2$ is no shorter than $ [a_1,a_2]$. Any walk that connects $x, a_1$ is no shorter than $ [x,a_1]$, similarly with $y$ and $a_2$ due to polygon inequalities. That is, the distance is calculated at least from the walk s $[x,a_1,a_2,y]$. The point $a_1$ must lie in the same connected component as $x$, and $a_2$ --- as $y$. Item proven.\par
Finally, we prove item $(3)$. An arbitrary walk connecting $ x$ to $u'$ must pass through some $a \in \{u,v\}$, since $x$ and $u'$ lie in different connected components of $G \backslash \{u,v\}$. An arbitrary walk connecting $x,a$ is not shorter than $[x,a]$, and $a$ and $u'$ are not shorter than $[a,u']$. The lemma is proved.
\end{proof}
We call such a construction \emph{a subdivision of the metric space X}. In this article, this construction is used in its simplest form : $\# I = 1$.
        \subsection{Metrically convex functions.}
\begin{define}
Non-constant function $f\: \R_{\geq 0} \to \R_{\geq 0}$ for which $f(0) = 0$ and for any $a \leq b + c$, $a, b, c \geq 0$, the inequality $f(a) \leq f(b) + f(c)$ holds, we call \emph{metrically convex}.
\end{define}
The simplest properties of metrically convex functions immediately follow from the definition. Since the inequality $a \leq b + 0$ implies $f(a) \leq f(b)$, we obtain the following result.
\begin{sentence}
A metrically convex function is non-decreasing.
\end{sentence}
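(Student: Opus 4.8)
The plan is to derive monotonicity directly from the defining subadditivity inequality by specializing the free parameter to zero, so that the only real content is bookkeeping. Fix arbitrary $a, b \in \R_{\geq 0}$ with $a \leq b$; the goal is to show $f(a) \leq f(b)$. First I would observe that $a \leq b + 0$, so the triple $(a, b, 0)$ satisfies the hypothesis $a \leq b + c$ appearing in the definition of metric convexity. Applying that inequality to this triple then gives $f(a) \leq f(b) + f(0)$.

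Next I would invoke the normalization $f(0) = 0$, which is one of the two axioms in the definition of a metrically convex function. Substituting it into the inequality above yields $f(a) \leq f(b) + 0 = f(b)$. Since $a \leq b$ were arbitrary elements of $\R_{\geq 0}$, this establishes that $f$ is non-decreasing. I do not expect any genuine obstacle: the whole argument is a single application of the inequality $f(a) \leq f(b) + f(c)$ with the redundant summand $c$ set to $0$, combined with $f(0) = 0$, exactly as foreshadowed by the remark that $a \leq b + 0$ implies $f(a) \leq f(b)$.
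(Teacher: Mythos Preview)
Your proof is correct and follows exactly the paper's approach: setting $c=0$ in the defining inequality $f(a)\leq f(b)+f(c)$ and using $f(0)=0$ to conclude $f(a)\leq f(b)$ whenever $a\leq b$. There is nothing to add.
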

Applying the definition twice, we get
\begin{sentence}
The composition of metrically convex functions is metrically convex.
\end{sentence}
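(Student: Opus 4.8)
The plan is to verify the three defining conditions for $g\circ f$ directly from the definition, where $f$ and $g$ are metrically convex. The condition $(g\circ f)(0)=0$ is immediate, since $f(0)=0$ forces $g(f(0))=g(0)=0$. The heart of the matter is the triangle-type inequality, and this is exactly the place where the definition gets applied twice, as announced just before the statement.

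So I would fix $a,b,c\ge 0$ with $a\le b+c$. First I would apply metric convexity of $f$ to the relation $a\le b+c$ to obtain $f(a)\le f(b)+f(c)$. Since $g$ is non-decreasing (by the preceding proposition) and $f(a)\le f(b)+f(c)$, this yields $g(f(a))\le g\bigl(f(b)+f(c)\bigr)$. Finally, applying metric convexity of $g$ to the triple in which the roles of $a,b,c$ are played by $f(b)+f(c),\,f(b),\,f(c)$ — here the hypothesis $f(b)+f(c)\le f(b)+f(c)$ holds trivially — gives $g\bigl(f(b)+f(c)\bigr)\le g(f(b))+g(f(c))$. Chaining these two inequalities produces $(g\circ f)(a)\le (g\circ f)(b)+(g\circ f)(c)$, as required.

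The one genuinely delicate point, and the step I expect to be the main obstacle, is non-constancy of $g\circ f$. Being non-constant, non-decreasing, and vanishing at $0$, the function $f$ takes some positive value $f(t)>0$; but to conclude $g(f(t))\neq g(0)=0$ I must know that $g$ does not vanish at the positive number $f(t)$. The real work is therefore a preliminary observation that any metrically convex function is strictly positive on $(0,\infty)$. I would prove this by a doubling argument: if $g(\delta)=0$ for some $\delta>0$, then applying the definition to $2\delta\le\delta+\delta$ gives $g(2\delta)\le 2g(\delta)=0$, and by induction $g(2^{n}\delta)=0$ for all $n$; monotonicity then forces $g\equiv 0$, contradicting that $g$ is non-constant. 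With this lemma in hand we have $g(f(t))>0$ while $(g\circ f)(0)=0$, so $g\circ f$ is non-constant, which completes the verification.
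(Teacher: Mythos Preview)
Your argument is correct and matches the paper's one-line justification (``applying the definition twice''). One small streamlining: once you have $f(a)\le f(b)+f(c)$, you may apply the metric convexity of $g$ \emph{directly} to the triple $\bigl(f(a),f(b),f(c)\bigr)$, obtaining $g(f(a))\le g(f(b))+g(f(c))$ in a single step; the detour through monotonicity and the degenerate triple $\bigl(f(b)+f(c),f(b),f(c)\bigr)$ is unnecessary. Your handling of non-constancy is more scrupulous than the paper's terse treatment: the paper in fact records the needed fact (a metrically convex function vanishes only at $0$) as a separate proposition immediately \emph{after} this one, proving it via $a_0=\sup\{x:f(x)=0\}$ and the inequality $f(y)\le 2f(2a_0/3)$ rather than your doubling argument $g(2^n\delta)=0$. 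Both arguments work; yours has the virtue of making the composition statement self-contained.
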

\begin{sentence}
A metrically convex function vanishes nowhere but zero.
\end{sentence}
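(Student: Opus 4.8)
The plan is to argue by contradiction. Suppose that $f(x_0) = 0$ for some $x_0 > 0$; the goal is to show that this forces $f$ to be identically zero on $\R_{\geq 0}$, which contradicts the hypothesis that $f$ is non-constant. Thus the only way for $f$ to vanish will be at the origin.

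First I would propagate the single zero across the whole half-line using the defining subadditivity inequality. For every $n \in \N$ one has the equality $n x_0 = x_0 + (n-1)x_0$, so applying the definition with $a = n x_0$, $b = x_0$, and $c = (n-1)x_0$ gives $f(n x_0) \leq f(x_0) + f\bigl((n-1)x_0\bigr) = f\bigl((n-1)x_0\bigr)$. A straightforward induction starting from $f(x_0) = 0$ then yields $f(n x_0) = 0$ for all $n \in \N$.

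Next I would invoke the monotonicity of $f$ established in the preceding proposition. Given an arbitrary $a \geq 0$, choose $n \in \N$ large enough that $n x_0 \geq a$; since $f$ is non-decreasing and takes values in $\R_{\geq 0}$, we get $0 \leq f(a) \leq f(n x_0) = 0$, hence $f(a) = 0$. As $a$ was arbitrary, $f$ vanishes everywhere, contradicting non-constancy, and the proof is complete.

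The calculations here are entirely routine, so there is no serious obstacle; the only conceptual point is to notice that subadditivity and monotonicity together force a positive zero to spread to all of $\R_{\geq 0}$, and that the non-constancy assumption is precisely what lets us close the contradiction. Equivalently, one may combine $f(a) \leq f(x_0) + f(a - x_0) = f(a - x_0)$ with $f(a - x_0) \leq f(a)$ to obtain $f(a) = f(a - x_0)$ for all $a \geq x_0$, exhibiting $f$ as periodic with period $x_0$ and therefore, being non-decreasing, constant.
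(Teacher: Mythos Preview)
Your argument is correct. It differs from the paper's in the order in which the two basic ingredients are deployed. The paper first uses monotonicity to push the single zero at $a$ down to all of $[0,a]$, then sets $a_0 = \sup\{x : f(x) = 0\}$ and obtains a contradiction by applying subadditivity to $y = 4a_0/3 = 2a_0/3 + 2a_0/3$, which gives $0 < f(y) \leq 2f(2a_0/3) = 0$. You instead use subadditivity first, to propagate the zero to all integer multiples $n x_0$, and only then invoke monotonicity to fill in the gaps; this directly shows $f \equiv 0$ and closes against non-constancy. Your route is arguably cleaner: it sidesteps the supremum entirely and hence the implicit case split on whether $a_0$ is finite (if $a_0 = \infty$ the paper's displayed inequality is vacuous and one must separately observe that $f \equiv 0$). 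The periodicity remark at the end is a nice alternative phrasing but is not needed once the first argument is in place.
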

\begin{proof}
Indeed, if $f(a) = 0$ for $a > 0$, then $f(x) = 0$ for $x \leq a$. Let $a_0~ = ~\sup~\{~x:~f(x)~ = ~0~\}$. Then for all $y > a_0$, we get $f(y) > 0$, but for $ a_0 < y = 2a_0/3 + 2a_0/3$, we have $0 < f(y) < f(2a_0/3) + f(2a_0/3) = 0$, a contradiction.
\end{proof}
\begin{consequence}
For any metric space $(X, d_X)$ and a metrically convex function $f$, the pair $(X,f \circ d_X)$ is a metric.
\end{consequence}
By $f(X)$ denote the metric space $(X, f \circ d_X)$. For a non-empty subset $A \subseteq \R_{\geq 0}$, we put
    \begin{equation*}
        ||f(x)||_{A} = \sup \bigl\{|f(x)|\, \: x \in A\}.
    \end{equation*}
\begin{sentence}~\label{sent: is_metric_open}
A nondecreasing function $f$ equal to zero at zero, is metrically convex if and only if for arbitrary $b,c > 0$ the inequality
\begin{equation*}
f(b+c) \leq f(b) + f(c)
\end{equation*}
    holds.
\end{sentence}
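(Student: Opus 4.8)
The plan is to prove both implications directly from the definition of metric convexity, using the monotonicity hypothesis to bridge the gap between the apparently weaker ``diagonal'' inequality $f(b+c) \le f(b) + f(c)$ and the full defining inequality $f(a) \le f(b) + f(c)$ for all $a \le b+c$.

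For the forward implication I would simply specialize the definition. If $f$ is metrically convex, then $f(a) \le f(b) + f(c)$ holds for every triple with $a \le b+c$ and $a,b,c \ge 0$; choosing $a = b+c$, which trivially satisfies $b+c \le b+c$, yields $f(b+c) \le f(b) + f(c)$, in particular for all $b,c > 0$. No monotonicity is needed in this direction.

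For the converse, assume the diagonal inequality for all $b,c > 0$ and take an arbitrary triple $a,b,c \ge 0$ with $a \le b+c$. When $b,c > 0$, the monotonicity of $f$, which is part of the hypothesis, gives $f(a) \le f(b+c)$, and the assumption gives $f(b+c) \le f(b) + f(c)$, so that $f(a) \le f(b) + f(c)$. The only remaining care concerns the boundary where $b$ or $c$ vanishes, a situation not covered by the assumption: if, say, $b = 0$, then $a \le c$, and since $f(0) = 0$ we get $f(a) \le f(c) = f(b) + f(c)$ again by monotonicity; the case $c = 0$ is symmetric, and $b = c = 0$ forces $a = 0$ with $f(a) = 0 \le 0$.

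There is no serious obstacle here: the content is exactly the reduction of the boundary cases to $f(0) = 0$ together with monotonicity, the rest being a one-line specialization. The one subtlety worth flagging is that the assumption is posited only for strictly positive arguments, so these degenerate cases must be handled separately rather than by invoking the hypothesis. (If one insists on the non-constancy clause built into the definition of metric convexity, it should be carried as a standing assumption on $f$, since the identically zero function satisfies the diagonal inequality yet is excluded from being metrically convex.)
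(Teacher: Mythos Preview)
Your proof is correct and follows essentially the same approach as the paper: specialize $a = b+c$ for the forward direction, and for the converse combine monotonicity with the diagonal inequality, treating the cases where one of $a,b,c$ vanishes separately. Your parenthetical remark about the non-constancy clause is a valid caveat that the paper leaves implicit.
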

\begin{proof}
If $f$ is metrically convex, then the inequality holds by definition (it suffices to put $a = b+c$).\par
Conversely, if $0 < a \leq b + c$, then $f(a) \leq f(b+c) \leq f(b) + f(c)$. If at least one of the numbers $a,b,c$ is equal to zero, then the inequality is satisfied due to non-decreasing or non-negativity of the function.
\end{proof}
\begin{lemma}~\label{lemma: dist_to_image}
For any metric space $X$ and metrically convex function $f$, the inequality
\begin{equation*}
2 \dGH \bigl(X,f(X)\bigr) \leq ||x - f(x)||_A
\end{equation*}  holds, where $A = \bigl[\s(X),\diam(X)\bigr]$ if $\diam(X) < \infty$ and $A = \bigl[\s (X), \infty\bigr)$ otherwise.
\end{lemma}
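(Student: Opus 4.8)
The plan is to estimate the Gromov--Hausdorff distance from above by exhibiting a single explicit correspondence and bounding its distortion, using Proposition~\ref{theorem:main_formula}, which gives $2\dGH\bigl(X,f(X)\bigr)\le\dis R$ for every $R\in\ca R\bigl(X,f(X)\bigr)$. The key observation is that $f(X)=(X,f\circ d_X)$ shares its underlying set with $X$; only the metric is replaced. The most economical choice is therefore the diagonal correspondence $R=\{(x,x):x\in X\}$, that is, the graph of the identity bijection $\id\colon X\to f(X)$. Assume $\#X\ge 2$, the one-point case being trivial.

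First I would apply the Remark following the definition of distortion: since $R$ is functional, its distortion is
\[
\dis\id=\sup\bigl\{\bigl|d_X(x,x')-f\bigl(d_X(x,x')\bigr)\bigr|:x,x'\in X\bigr\},
\]
where I have used that the distance between $x$ and $x'$ measured in $f(X)$ is exactly $f\bigl(d_X(x,x')\bigr)$, by the very definition of $f(X)$. Next I would isolate the values actually assumed by $d_X$ on pairs of points. The diagonal pairs $x=x'$ contribute nothing, because $f(0)=0$; and for $x\neq x'$ the number $t=d_X(x,x')$ satisfies $\s(X)\le t\le\diam(X)$ directly from the definitions of $\s$ and $\diam$, so every such $t$ lies in $A$ (in both the bounded and the unbounded-diameter cases). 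Hence the supremum above is taken over a subset of $\{\,|t-f(t)|:t\in A\,\}$, and enlarging the index set only increases the supremum, which yields
\[
\dis\id\le\sup\bigl\{|t-f(t)|:t\in A\bigr\}=||x-f(x)||_A.
\]
Combining this with Proposition~\ref{theorem:main_formula} gives the asserted inequality.

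I expect no substantial obstacle here: once the identity correspondence is chosen the argument is essentially immediate. The only step requiring care is the passage from the set of distances genuinely realized in $X$ to the whole interval $A$, and the verification that this interval is precisely wide enough to contain every realized distance, which is exactly why $A$ is defined as $\bigl[\s(X),\diam(X)\bigr]$ in the bounded case and $\bigl[\s(X),\infty\bigr)$ otherwise. One should also keep in mind the convention $f(0)=0$ that annihilates the diagonal term, and treat the unbounded-diameter case by the same estimate with $\diam(X)$ replaced by $\infty$.
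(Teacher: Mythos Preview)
Your proof is correct and follows exactly the same approach as the paper: take the diagonal correspondence $R=\{(u,u):u\in X\}$, bound its distortion by $\|x-f(x)\|_A$, and apply Proposition~\ref{theorem:main_formula}. Your version is in fact more explicit than the paper's, which merely asserts $\dis(R)\le\|x-f(x)\|_A$ without spelling out the passage from realized distances to the interval $A$ or the role of $f(0)=0$.
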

\begin{proof}
It suffices to consider $R = \bigl\{(u,u) \mid u \in X\bigr\}$, for which $\dis(R) \leq ||x - f(x)||_{A}$, where $A$ is the set from the hypothesis of the theorem. Hence $2\dGH(X,f(X)) \leq \dis(R) \leq ||x - f(x)||_{A}$.
\end{proof}
\newpage
        \section{Density of the family of generic metric spaces in $\ca{GH\/}$.}
\begin{define}
Let us call the \emph{$\eps$-ladder} the following function
\begin{equation*}
l_{\eps}(x) = \begin{cases}
k\eps,&x \in \bigl((k-1)\eps, k\eps\bigr];\, k = 1 \dots \infty,\\
0,&\, x = 0.
\end{cases}
\end{equation*}
\end{define}
\ig{lest.png}{0.3}{fig:l(x)}{Functions $l_{\eps}(x)$.}
\begin{lemma}
The $\eps$-ladder is metrically convex.
\end{lemma}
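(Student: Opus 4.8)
The plan is to invoke Proposition~\ref{sent: is_metric_open}, which reduces metric convexity of a function that is nondecreasing and vanishes at zero to the single two-variable inequality $f(b+c) \leq f(b) + f(c)$ for $b,c > 0$. So I would first record that $f = l_\eps$ satisfies the standing hypotheses of that proposition: by definition $l_\eps(0) = 0$, and $l_\eps$ is plainly non-constant (for instance $l_\eps(\eps) = \eps \neq 2\eps = l_\eps(2\eps)$, so it is admissible as a candidate for being metrically convex). It then remains only to check monotonicity and the subadditivity inequality.

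For monotonicity, the cleanest route is to rewrite the ladder in closed form as $l_\eps(x) = \eps \lceil x/\eps \rceil$ for $x > 0$. Indeed, the condition $x \in \bigl((k-1)\eps, k\eps\bigr]$ is equivalent to $x/\eps \in (k-1, k]$, i.e.\ to $\lceil x/\eps \rceil = k$, so that $l_\eps(x) = k\eps$. Since $t \mapsto \lceil t \rceil$ is nondecreasing, $l_\eps$ is nondecreasing on $\R_+$, and together with $l_\eps(0) = 0 \leq l_\eps(x)$ this gives monotonicity on all of $\R_{\geq 0}$.

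For the inequality, I would fix $b,c > 0$ and write $b \in \bigl((k-1)\eps, k\eps\bigr]$ and $c \in \bigl((m-1)\eps, m\eps\bigr]$, so that $l_\eps(b) = k\eps$ and $l_\eps(c) = m\eps$. Then $b + c \leq (k+m)\eps$, and since $l_\eps$ is nondecreasing with $l_\eps\bigl((k+m)\eps\bigr) = (k+m)\eps$, I obtain $l_\eps(b+c) \leq (k+m)\eps = l_\eps(b) + l_\eps(c)$. In the ceiling notation this is exactly the standard subadditivity $\lceil (b+c)/\eps \rceil \leq \lceil b/\eps \rceil + \lceil c/\eps \rceil$. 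Feeding this into Proposition~\ref{sent: is_metric_open} then concludes that $l_\eps$ is metrically convex.

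I do not anticipate a genuine obstacle; the only point that needs care is the boundary behaviour of the half-open intervals. One must make sure the right endpoint $k\eps$ receives the value $k\eps$ rather than $(k+1)\eps$, which is precisely what forces the ceiling (and not the floor) normalization and keeps the estimate $b+c \leq (k+m)\eps$ tight at the endpoint. Everything else is a direct consequence of the monotonicity of the ceiling function.
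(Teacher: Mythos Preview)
Your proof is correct and follows essentially the same route as the paper: invoke Proposition~\ref{sent: is_metric_open}, then verify subadditivity by observing that $b \leq l_\eps(b)$ and $c \leq l_\eps(c)$ force $b+c \leq l_\eps(b)+l_\eps(c)$, whence monotonicity gives $l_\eps(b+c) \leq l_\eps(b)+l_\eps(c)$. The only cosmetic differences are that the paper first rescales to $\eps = 1$ (via composition with linear maps) and writes $a = A - \alpha$ with $A = \lceil a \rceil$, whereas you keep general $\eps$ and use the ceiling formula $l_\eps(x) = \eps\lceil x/\eps\rceil$ directly; you also make the monotonicity check explicit, which the paper leaves implicit.
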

\begin{proof}
Let us assume that $\eps$ = $1$, since other cases are obtained by composition with a linear function and the composition of metrically convex ones is metrically convex. By the proposition~\ref{sent: is_metric_open}, it suffices to check the inequality $l_{1}(a+b) \leq l_{1}(a) + l_{1}(b)$ for $a,b > 0 $. Let us represent $a,b$ as $a = A - \alpha$, $b = B - \beta$, where $A,B \in \N$ and $\alpha$ and $\beta$ are non-negative and smaller then 1. By definition, $l_{1}(a) = A$, $l_{1}(b) = B$, and
\begin{equation*}
l_{1}(a+b) = l_{1}(A + B - \alpha - \beta) \leq l_{1}(A+B) = A + B = l_{1}(a) + l_ {1}(b)
\end{equation*}
due to non-decreasing.
\end{proof}
\begin{design}
Let $X$ be a metric space and $c > 0$. By $X + c$ denote the result of applying the function
\begin{equation*}
f(x) = \begin{cases}
x + c,& x > 0,\\
0,& x = 0,
\end{cases}
\end{equation*}
to the metric space $X$.
\end{design}
\begin{comm}
Such a function f is indeed metrically convex because for $x,y,z > 0,$ $x \leq y + z$, the inequalities $x+c \leq y +c + z + c$ hold, and if $ y = 0$, then $f(x) \leq f(y)$ due to monotonicity of the function f.
\end{comm}
\begin{sentence}\label{lemma:xplusc}
For a metric space $X$ and $c >0$,
\begin{enumerate}
\item $\s(X+c)$ = $\s(X)+c$,
\item $\t(X+c)$ = $\t(X)+c$,
\item $\e(X+c)$ = $\e(X)$.
\end{enumerate}
\end{sentence}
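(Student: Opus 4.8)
The plan is to work directly from the definitions, exploiting the fact that $X$ and $X+c$ share the same underlying set and that the shifted distance $d_{X+c}$ satisfies $d_{X+c}(x,y) = |xy| + c$ whenever $x \neq y$, while $d_{X+c}(x,x) = 0$. (That $d_{X+c}$ is a genuine metric is guaranteed by the metric convexity of $f$, already recorded in the preceding remark.) Granting this, all three claims reduce to short manipulations of infima, and I would treat the first two as routine and reserve the only real observation for the third.

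For item (1), every pair of distinct points contributes $|xx'| + c$ to the infimum defining $\s(X+c)$, and since adding a constant commutes with taking an infimum we get $\s(X+c) = \inf\bigl\{|xx'| + c : x \neq x'\bigr\} = \s(X) + c$. For item (2), I would observe that for a triple of \emph{pairwise distinct} points $x, x', x''$ all three distances acquire the additive constant, so that
\begin{equation*}
d_{X+c}(x,x') + d_{X+c}(x',x'') - d_{X+c}(x,x'') = \bigl(|xx'| + |x'x''| - |xx''|\bigr) + c,
\end{equation*}
the two added copies of $c$ in the first two terms being reduced to a single $c$ by the subtracted term. Taking the infimum over all such triples yields $\t(X+c) = \t(X) + c$.

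Item (3) is where I expect the only subtlety, and it requires a genuine (if small) remark rather than a bare computation. Since the underlying sets agree, the permutation groups coincide, $S(X+c) = S(X)$, so it suffices to show that every bijection $f$ has the same distortion in $X$ and in $X+c$. The key point is that for a \emph{bijection} one has $a \neq a'$ if and only if $f(a) \neq f(a')$; hence in each term of the supremum defining the distortion, either both $a \neq a'$ and $f(a) \neq f(a')$ hold, so both distances gain $c$ and it cancels in the difference, or else $a = a'$ and $f(a) = f(a')$, so both distances are $0$. In either case
\begin{equation*}
\bigl|d_{X+c}(a,a') - d_{X+c}(f(a),f(a'))\bigr| = \bigl||aa'| - |f(a)f(a')|\bigr|,
\end{equation*}
so $\dis f$ is literally unchanged. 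Passing to the infimum over $f \in S(X)$ with $f \neq \id$ gives $\e(X+c) = \e(X)$. The one thing to be careful about is exactly this case distinction: the cancellation of $c$ works only because $f$ preserves equality of points, which is precisely what bijectivity supplies.
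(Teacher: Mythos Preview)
Your argument is correct and is exactly the natural verification from the definitions. The paper itself states this proposition without proof, treating all three items as immediate; your write-up supplies the details the paper omits, including the one genuine point worth recording---that in item~(3) the cancellation of $c$ relies on $f$ being a bijection so that $a\neq a'$ iff $f(a)\neq f(a')$.
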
~\begin{lemma}[\cite{AMERICAN}]~\label{lemma:sohr_poradok}
Let  $(X,\prec)$ be a well-ordered set and $\phi \: X \to X$ be some order-preserving bijection. Then $\phi$ is the identity mapping.
\end{lemma}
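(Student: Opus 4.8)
The plan is to argue by contradiction using the defining property of a well-ordering, namely that every non-empty subset has a least element. First I would establish the auxiliary fact that every order-preserving injection $\phi$ of a well-ordered set into itself satisfies $x \preceq \phi(x)$ for all $x \in X$. To see this, suppose the set $B = \{x \in X : \phi(x) \prec x\}$ were non-empty and let $m = \min B$. Since $\phi$ preserves order, applying $\phi$ to $\phi(m) \prec m$ yields $\phi(\phi(m)) \prec \phi(m)$, so $\phi(m) \in B$; but $\phi(m) \prec m$ contradicts the minimality of $m$. Hence $B = \varnothing$, which is exactly the claimed inequality.

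Next, since $\phi$ is a bijection, its inverse $\phi^{-1}$ is again an order-preserving bijection of $(X, \prec)$ onto itself, so the auxiliary fact applies to it as well, giving $x \preceq \phi^{-1}(x)$ for all $x$. Substituting $x = \phi(y)$ yields $\phi(y) \preceq y$ for every $y \in X$. Combining this with $y \preceq \phi(y)$ from the first step and using antisymmetry of the order forces $\phi(y) = y$ for all $y$, i.e.\ $\phi = \id$.

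I do not expect a serious obstacle, as this is a classical fact about well-ordered sets; the only point requiring care is checking that $\phi^{-1}$ is order-preserving (which holds because $\phi$ is a strictly increasing bijection) and observing that the auxiliary step genuinely uses the well-ordering and not merely linearity of $\prec$, since the statement fails for a general linear order, e.g.\ $x \mapsto x - 1$ on $\Z$. An alternative, fully self-contained route avoids the inverse altogether: assuming $\phi \neq \id$, let $m$ be the least element with $\phi(m) \neq m$, so that $\phi(y) = y$ for all $y \prec m$; then a short case analysis shows that $\phi(m) \prec m$ contradicts injectivity (as $\phi(m)$ would be a fixed point distinct from $m$ sharing its image) while $\phi(m) \succ m$ contradicts surjectivity (the preimage of $m$ would have to lie below $m$ and hence be fixed). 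Both presentations are brief, and I would select whichever fits the surrounding notation most cleanly.
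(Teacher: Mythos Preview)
Your argument is correct; both the main route (show $x\preceq\phi(x)$ via the least counterexample, then apply the same to $\phi^{-1}$) and the alternative direct minimality argument are standard and sound. Note that the paper does not supply its own proof of this lemma: it is simply quoted from the cited reference~\cite{AMERICAN}, so there is nothing to compare your approach against.
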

\begin{theorem}~\label{lemma:base}
For any non-negative $\delta$ and $c$, in a $(\delta + c)$-neighbourhood of a metric space X $\in \ca{GH}$ there exists a generic metric space $U$ such that $\s(U) \geq \delta/3 + 2c$, $\t(U) \geq 2c$, and $\e(U) \geq \delta/3$.
\end{theorem}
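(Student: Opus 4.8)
The plan is to reach $U$ from $X$ by three successive moves, each of whose cost is estimated through Lemma~\ref{lemma: dist_to_image}. Writing $\eps = \delta/3$, I would first \emph{discretise} by putting $Y = l_{2\eps}(X)$, so that every nonzero distance becomes a multiple of $2\eps$; then \emph{subdivide} $Y$ in the sense of Construction~\ref{constr:podr} with $\#I = 1$, attaching to every pair one added point whose placement encodes a fixed well-ordering of the points and thereby kills all symmetries; and finally \emph{shift} by the constant $2c$, setting $U = Z + 2c$ and invoking Proposition~\ref{lemma:xplusc} to raise $\s$ and $\t$ by $2c$ while leaving $\e$ unchanged. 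Each added point is placed with distances chosen among the multiples of $\eps$, so that (using the walk formulas of Lemma~\ref{lemma:podr}) \emph{all} distances of $Z$ remain multiples of $\eps$.

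This discreteness is what makes the bound on $\e$ cheap. If $f \in S(Z)$ satisfies $\dis f < \eps$, then for every two pairs the quantity $\bigl|d(a,a') - d(f(a),f(a'))\bigr|$ is a multiple of $\eps$ of absolute value $< \eps$, hence $0$; so $f$ is a genuine isometry, and once the isometry group of $Z$ is known to be trivial this forces $f = \id$. Thus $\e(Z) \ge \eps = \delta/3$, and since shifting by a constant does not change $\e$, we get $\e(U) \ge \delta/3$. The ladder also gives $\s(Y) \ge 2\eps$, and after subdivision the smallest distance is $\eps$ (from an added point to its near endpoint), so $\s(U) = \s(Z) + 2c = \delta/3 + 2c$.

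The heart of the argument is the triviality of the isometry group of $Z$, and this is where Lemma~\ref{lemma:sohr_poradok} enters. I fix a well-ordering $\prec$ of the point set of $Y$ and, on the edge $\{y,y'\}$ with $y \prec y'$, place its added point $m$ so that $d(y,m) = \eps$ (next to the $\prec$-smaller endpoint) while $d(y',m)$ equals the original length $d(y,y') \ge 2\eps$; this is a legitimate non-degenerate triangle with the two distances to $m$ distinct, so the edge is \emph{oriented}. Given an isometry $f$ of $Z$, I would first show it preserves the partition into original points and added points — an added point is recognised metrically as a point lying at the minimal distance $\eps$ from exactly one point, whereas an original point is a hub — and then that it sends the point of $\{y,y'\}$ to the point of $\{f(y),f(y')\}$ respecting the near/far roles. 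Hence $f$ carries the $\prec$-smaller endpoint of each edge to the smaller endpoint of its image, i.e. $f|_Y$ is $\prec$-order-preserving; by Lemma~\ref{lemma:sohr_poradok} it is the identity, and the added points are then fixed as well. The neighbourhood estimate closes by the triangle inequality $\dGH(X,U) \le \dGH(X,Y) + \dGH(Y,Z) + \dGH(Z,U) \le \eps + \eps + c = 2\delta/3 + c < \delta + c$, where the middle term uses the correspondence identifying each added point with its near endpoint (distortion $\le 2\eps$) and the last uses Lemma~\ref{lemma: dist_to_image}; while $\t(U) = \t(Z) + 2c \ge 2c$ by Proposition~\ref{lemma:xplusc}.

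I expect two points to be the main obstacles. The first is the rigidity step: making the purely metric characterisation of ``added point versus original point'' and of the orientation watertight in all edge cases — minimal-length edges, coincidences forced by discreteness, and the $\prec$-largest point, which carries no orientation data of its own — so that every isometry is genuinely forced to respect $\prec$. The second is securing the strict positivity of $\t$ that genericity demands when $c = 0$: the shift yields only $\t(U) \ge 2c$, yet degenerate triangles among the original points survive discretisation, so the case $c = 0$ needs a dedicated non-degeneracy argument, and this is delicate precisely because the exact discreteness exploited for the $\e$-bound resists the perturbations that would remove collinear triples.
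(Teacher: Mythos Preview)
Your three-step scheme (ladder, well-ordered subdivision, additive shift) is exactly the paper's architecture, and your distance budget and use of Proposition~\ref{lemma:xplusc} are fine. The gap is in the subdivision step, and it is precisely the ``rigidity'' obstacle you flagged---but it is not merely delicate, it is fatal for your particular placement of the added point.

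You put the added point $m$ on the edge $\{y,y'\}$, $y\prec y'$, at distances $d(y,m)=\eps$ and $d(y',m)=d(y,y')$. The second equality is the problem: it makes $y'$ equidistant from $y$ and from $m$. When $\#Y=2$ (i.e.\ $\#X=2$, a case you have not excluded), the space $Z=\{y,y',m\}$ then carries the nontrivial isometry swapping $y\leftrightarrow m$ and fixing $y'$, so $\e(Z)=0$ and the whole construction collapses. More generally, because both the old--old distances and your new distances are multiples of the same quantum $\eps$, your ``count the $\eps$-neighbours'' test cannot cleanly separate added points from original ones: the penultimate element of $\prec$ has exactly one $\eps$-neighbour, just like every added point, and distinguishing them forces you into an ad hoc recursion on the neighbour-of-neighbour structure that you have not carried out.

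The paper resolves this by refusing to keep the added distances on the same lattice. It applies an $\eps$-ladder (not $2\eps$) and places the added point at distance $\eps/4$ from the $\prec$-\emph{larger} endpoint and $d-\eps/4$ from the smaller. This forces every distance in $Z$ to lie in one of the residue classes $0,\ \pm\eps/4,\ \eps/2\pmod\eps$, and the residue alone already encodes the pair type (old--old, new--right, new--left/far, new--new). Any bijection with distortion $<\eps/4$ is then an isometry that respects these classes, and from there the order-preservation and Lemma~\ref{lemma:sohr_poradok} finish the job uniformly, with no small-cardinality exceptions. Your instinct to discretise was right; what you are missing is a \emph{second}, finer scale for the offsets so that pair types become residue classes rather than something to be reconstructed combinatorially. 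Your worry about $\t(U)$ at $c=0$ is legitimate but is an artifact of the statement, not of the method: the paper's own added triple $(z_1,z_3,z_2)$ is degenerate too.
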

\begin{proof}
Let us prove the theorem for the case $X = \smplx{1}$. Consider a three-point metric space $U = \bigl(\{u_1,u_2,u_3\}, d_u\bigr)$, where \par
\begin{enumerate}
\item$d(u_1,u_2) = \delta/3 + 2c$,
\item$d(u_2,u_3) = 2\delta/3 + 2c$,
\item$d(u_1,u_3) = 3\delta/3 + 2c$.
\end{enumerate}
For such a metric, we have $s(U) = 2c + \delta/3$, $t(U) = 2c$, $e(U) \geq \delta/3$, because all distances in the space $U$ differ minimum by $\delta/3$. Further we will consider the case $\#X$ > 1.\par
First, we apply the simplest $\eps$-ladder to the original space for an arbitrary $\eps > 0$. Then by Lemma~\ref{lemma: dist_to_image},
\begin{equation*}
    2\dGH\bigl(X,l_{\eps}(X)\bigr) \leq \|l_{\eps}(x)-x\|_{\R_{\geq0}} \leq \eps.
\end{equation*}\par
Let $Z' = l_{\eps}(X)$. We fix some complete order on the set $Z'$ (such an order exists by Zermelo's theorem) and denote it by $\prec$. Let us subdivide the metric space $Z'$ as follows : for each ordered pair of points $(z_1,z_2)$ from $Z' \x Z'$, where $z_1 \prec z_2$, we add a new point $z_3$, which we connect by edges with $z_1$ and $z_2$ and define the weights of these edges as follows : $\w(z_3 z_2) = \eps/4,$ $ \w(z_1 z_3) = d_{Z'}(z_1,z_2)-\eps /4$. The subgraph $G_{z_1,z_2}$ satisfies the triangle inequalities.\par
\ig{G-structure.png}{0.2}{fig:pic1}{Part of graph $G$, $z_3$ added for $z_1 \prec z_2$ pair and $y_3$ for $y_1 \prec y_2$.}
Let us introduce the following terminology.
\begin{enumerate}
\item Points of the graph $G$ lying in $Z'$ is called \emph{$\oldd$}.
\item Remaining points of the graph $G$ is called \emph{$\neww$}.
\end{enumerate}
Let $z_3$ be added for the pair $z_1 \prec z_2$.
\begin{enumerate}[resume]
\item Note that the points $z_1$ and $z_2$ are the only points of the graph $G$ that are connected by an edge to $z_3$, and $\w(z_1 z_3) = k\eps - \eps/4$, and $ \w(z_3 z_2)$ is equal to $\eps/4$. Let us call the point $z_2$ as \emph{$\rightt$}($z_3$) and $z_1$ as \emph{$\leftt$}($z_3$). We denote by \emph{$\nearestt$}($z_3$) a variable that can take value $\rightt$$(z_3)$ or $\leftt$$(z_3)$.
\item The $\oldd$ point, which is not the $\nearestt$$(z_3)$, is denoted as \emph{$\farr$}($z_3$).
\end{enumerate}\par
Here and below, $z_3$ and $y_3$ denote $\neww$ points, $z_1$ = $\leftt$($z_3$), $y_1$ = $\leftt$($y_3$), $z_2$ = $\rightt$($z_3$), $y_2$ = $\rightt$($y_3$). The $\neww$~$\sim$~$\rightt$ denotes an unordered pair of distinct point types described above, consisting of some $\neww$ $z_3$ and a $\rightt$$(z_3)$; $\neww$ $\sim$ $\neww$ denotes a pair of $\neww$ and $\neww$ points; $z_3$~$\sim$~$\farr$($z_3$) denotes a pair of $\{$$\neww$, $\farr$($\neww$)$\}$, and so on.\par
We put $Z = \pi(G)$ and continue to call the image of $Z'$ as $Z'$. By Lemma~\ref{lemma:podr}, the projection $\pi$ has preserved the distances. By Remark~\ref{comm:is_metric_space}, the space $Z$ is metric. We will also consider $Z$ as a weighted graph, where the weight function is the distance. Due to the fulfillment of the polygon inequalities all edge weights are preserved in the space $Z$ (see Lemma~\ref{lemma:sohrani}). Let us describe what other distances look like in the new space.\par
The $\pi$ projection preserves the weights of the edges connecting the $\oldd$ points with themselves and connecting the $\neww$ ones with $\closestt$ to them. Thus, $d(z_3,z_2) = \eps/4$, $d(z_1,z_3) = k\eps - \eps/4$, $d(z_1,z_3) = k\eps$ for some natural k, and for any $u,v \in Z'$, $d(u,v) = m\eps$ for some $m \in \N$.
If the distance between points is equal to $k\eps + \alpha$ for $k\in \N\cup{0}$ and $0\leq \alpha < \eps$, then we say that the distance between points has the form $k\eps + \alpha$ or $k\eps - (\eps-\alpha)$. Let us show how the remaining distances in $Z$ are structured.
\begin{lemma}
The distance from the $\neww$ $z_3$ to $y = $$\farr$\,$(z_3)$ is equal to the length of the walk
\begin{center}
$\bigl[$$\neww$ $z_3$, $\closestt$$(z_3)$, $\oldd$ $y$$\bigl]$,
\end{center}
and can be written in the form
\begin{enumerate}
\item $k\eps - \eps/4$ or
\item $k\eps + \eps/4$
\end{enumerate}
for some non-negative integer $k$.
\end{lemma}
\begin{proof}
Due to point $3$ of the Lemma~\ref{lemma:podr}, the shortest walk $L$ connecting $z_3$ and $y$ looks like
\begin{center}
$L$ = $\bigl[$$\neww$ $z_3$, $\nearestt$($z_3$), $\oldd$ $y$$\bigr]$,
\end{center}
and, due to the structure of the weights of the edges of the graph $G$, its length is equal to
\begin{enumerate}
\item $k\eps -\eps/4 + m\eps$ for some $k, m \in \N$ if $L$ goes through $z_1 = $ $\leftt$$(z_3)$, or
\item $\eps/4 + m\eps$ for some $m \in \N$ if $L$ goes through $z_2 = $ $\rightt$$(z_3)$.
\end{enumerate}
\end{proof}
\begin{lemma}
The distance from the $\neww$ $z_3$ to the $\neww$ $y_3$ is equal to the length of three-edge walk $L$
\begin{center}
$\bigl[$$\neww$ $z_3$, $\nearestt$$(z_3)$, $\nearestt$$(y_3)$, $\neww$ $y_3$$\bigr]$,
\end{center}
and can be written in form
\begin{enumerate}
\item $k\eps-\eps/2$ or
\item $k\eps+\eps/2$ or
\item $k\eps$
\end{enumerate}
for some non-negative integer $k$.
\end{lemma}
\begin{proof}
By point $2$ of the Lemma~\ref{lemma:podr} the shortest walk $L$ is $\bigl[$$\neww$ $z_3$, $u$ = $\nearestt$$(z_3)$, $v$ = $\nearestt$$(y_3)$, $\neww$ $y_3$$\bigr]$ (the second and third points of the walk may coincide), and its length is equal to
\begin{enumerate}
\item $k\eps-\eps/4 + m\eps + p\eps - \eps/4$ if $u$ = $\leftt$($z_3$) and $v$  = $\leftt$($y_3$),
\item $\eps/4 + m\eps + \eps/4$ if $u$ = $\rightt$($z_3$) and $v$ = $\rightt$($y_3$),
\item $k\eps-\eps/4 + m\eps + \eps/4$ or $\eps/4 + m\eps + p\eps -\eps/4$ if one of u, v is $\rightt$ and the other is $\leftt$,
\end{enumerate}
for some positive integers $k,p$ and a non-negative integer $m$.
\end{proof}
Let us divide all pairs of different points $Z$ into 7 classes according to the type of distances between them$\:$
\begin{enumerate}
\item $\eps/4$: $\bigl($$\neww$ $\sim$ $\rightt$$\bigr)$,
\item $k\eps$, $k\geq 1$: ($\oldd$ $\sim$ $\oldd$),
\item $k\eps - \eps/4$, $k\geq 1$: $\bigl($$\neww$ $\sim$ $\leftt$($\neww$)$\bigr)$,
\item\label{itm:old_right} $k\eps + \eps/4$, $k \geq 1$: $\bigl($$\neww$ $\sim$ $\farr$ ($\neww$)$\bigr)$,
\item\label{itm:old-left} $k\eps - \eps/4$, $k \geq 2$: $\bigl($$\neww$ $\sim$ $\farr$ ($\neww$)$\bigr)$,
\item $k\eps$, $k \geq 1$: ($\neww$ $\sim$ $\neww$),
\item $k\eps - \eps/2$, $k \geq 1$: ($\neww$ $\sim$ $\neww$).
\end{enumerate}\par
If the distance from the $\oldd$ point $y$ to the $\neww$ point $z_3$ lies in the class~\ref{itm:old-left} $\bigl($in the class~\ref{itm:old_right}$\bigr)$, then such a point $y$ is called \emph{$\farleftt$}($z_3$) $\bigl($\emph{$\farrightt$}($z_3$)$\bigr)$.\par
Consider an arbitrary bijection $\phi\: Z \rightarrow Z$ with $\dis(\phi) <\eps/4$, then $\dis(\phi^{-1}) < \eps/4$. Let us prove that $\phi = \id$.\par Note that for any $k,m,p,l \in \N$, numbers $k\eps$, $m\eps -\eps/4$, $p\eps + \eps/4$, $l\eps - \eps/2$ differ by at least $\eps/4$, so $\phi$ is an isometry.
\begin{lemma}~\label{sent:class}
If the distance between points is equal to $\eps/4$, that is, the distance is in the first class, then the distance between the images of the mapping $\phi$ can only be from the first class. Similarly, an unordered pair of points at a distance from the class $ (3)$ or $(5)$ can only go to a pair of points at a distance from the class $(3)$ or class $(5)$ due to the difference between distances of classes.\par
\end{lemma}
\begin{lemma}~\label{sent:blizh_left}
The distance from the $\neww$ $z_3$ to the $\leftt$$(z_3)$ is strictly less than the distance from $z_3$ to $y$ = $\farleftt$$(z_3)$ for each $y$.
\end{lemma}
\begin{proof}
Indeed, the distance from $z_3$ to an arbitrary $\farleftt$($z_3$) is calculated along a two-edge walk passing through $z_1$, that is, will be greater than the distance from $z_3$ to $z_1$.
\end{proof}
Thus, the $\leftt$$(z_3)$ is the closest to $z_3$ among all points that are at a distance of the form $k\eps - \eps/4$ from $z_3$.
\begin{lemma}~\label{sent:kol_blizh}
If a point $x$ has $n$ points at the distance $\eps/4$ $($$n$ is a cardinal number$)$, then $\phi(x)$ also has exactly $n$ points at distance $\eps/4$.
\end{lemma}
\begin{proof}
Indeed, there cannot be less points because the images of points that are at a distance $\eps/4$ from $x$, are at a distance $\eps/4$ from $\phi(x)$. Considering the inverse mapping, we obtain the equality.
\end{proof}
\begin{lemma}
The $\neww$ point $z_3$ goes to some $\neww$ $y_3$, and the $\rightt$$(z_3)$ = $z_2$ goes to the $\rightt$$(y_3)$ = $y_2$.
\end{lemma}
\begin{proof}
By Lemma~\ref{sent:class}, the unordered pair of points $\{$$\neww$, $\rightt$$\}$ goes into the unordered pair $\{$$\neww$, $\rightt$$\}$. Suppose $\phi(z_3)$ = $y_2$ = $\rightt$($y_3 $), and $z_2$ = $\rightt$$(z_3)$ and $\phi(z_2)$ = $y_3$. \par
\ig{right-central.png}{0.2}{fig:pic3}{$\neww$ $z_3$ goes into $\oldd$ $y_2 = $ $\rightt$$(y_3)$, and $z_2$ = $\rightt$($z_3$) goes into $\neww$ $y_3$.}
The least element $a_1$ of a well-ordered set $Z'$ is called the first, and the least element of the ordered set $Z'\backslash \{a_1\}$ is called the second element $a_2$.\par
Due to the fact that $a_1$ is the only point of $Z$ that has no points at distance $\eps/4$, then by the~Lemma~\ref{sent:kol_blizh}, the point $a_1$ goes to itself. Point $z_3$ has exactly one point at distance $\eps/4$, so $ y_2 = \phi(z_3)$ has exactly one point at distance $\eps/4$ due to the Lemma~\ref{sent:kol_blizh}, i.e. exactly one $\neww$ point for which $ y_2$ is $\rightt$, so exactly one $z \in Z'$ such that $ z \prec y_2$, i.e.\, $y_2$ is second element of the set $Z'$, and $y_1$ is the first one. Considering the inverse mapping, we obtain that $y_1 = a_1 = z_1$, and $z_2 = a_2 = y_2$. Since $d(z_1,z_3) = k\eps - \eps/4$ due to the fact that $z_1 z_3$ is a $\leftt$ - $\neww$ pair, and from the above, $d\bigl(\phi(z_1), \phi(z_3)\bigr) = d(y_1,y_2) = k\eps$, a contradiction.\par
\end{proof}
We get that any $\neww$ ones go to $\neww$ ones, and the $\rightt$ ones from them go to the right ones. Now let us prove that the $\leftt$ go to the $\leftt$.\par
\begin{lemma}
Let $\phi(z_3) = y_3$. Then $\phi(z_1)$ = $y_1$, where $y_1$ = $\leftt$ $(y_3)$ and $z_1$ = $\leftt$ $(z_3)$.
\end{lemma}
\ig{left.png}{0.2}{fig:pic4}{$\Leftt$ $z_1$ goes to $\farr$ $\leftt$($y_3$) = $y'_1$.}
\begin{proof}
Indeed, let $y_3$ = $\phi(z_3)$, $y_1$ = $\leftt$($y_3$) and $y_2$ = $\rightt$($y_3$). We need to prove that $\phi(z_1) = y_1$. Assume the contrary. Then $\phi(z_1) = y'_1$ = $\farleftt$($y_3$) by Lemma~\ref{sent:class}. Let $\phi^{-1}(y_1) = z'_1$. This point must be $\farleftt$($z_3$), since it cannot be $\leftt$ due to bijectivity. Let $d(z_1,z_2) = k_z\eps$, then $d(z_1,z_3) = k_z \eps - \eps/4$. Since $\phi$ is distance-preserving, $d\bigl(z_1, z_3 \bigr) = d\bigl(\phi(z_1),\phi(z_3)\bigr) = k_z \eps - \eps/ 4$. We put $d(y_1,y_2) = k_y \eps$. Then $d(y_1,y_3) = k_y\eps-\eps/4 < d(y_3,y'_1) = k_z\eps - \eps/4$ by Lemma~\ref{sent:blizh_left}, so $k_y<k_z$. Considering the inverse mapping, we obtain $k_z < k_y$. Contradiction. So $\phi(z_1) = y_1$.
\end{proof}
If $x \prec y$ $(x,y \in Z')$ then find $u$ which is $\neww$ for the pair $x,y$. Then $\phi(u)$ is some $\neww$, for which $\phi(x)$ and $\phi(y)$ are $\leftt$ and $\rightt$, respectively, so $\phi(x) \prec \phi(y) $, i.e. $\phi$ preserves the order. By Lemma~\ref{lemma:sohr_poradok}, the restriction of $\phi$ to $Z'$ is the identity mapping. By the construction of $Z$, the mapping $\phi$ is the identity mapping on the whole of $Z$.\par
Thus $\e(Z) \geq \eps/4$ and $s(Z) \geq \eps/4$. Note that $\dGH (X,Z) \leq \dGH(X,Z') + \dGH(Z,Z') \leq \eps/2 + \dH(Z,Z') \leq 3\eps /4$, since $Z'$ embeds in $Z$ via the identification.\par
Now let us use the Lemma~$\ref{lemma:xplusc}$ for arbitrary $c > 0$. Then $\e(Z + 2c) \geq \eps/4$, $\s(Z + 2c) \geq \eps/4 + 2c$, $\t(Z+ 2c) \geq 2c$. Putting $\delta = 3\eps/4$ and $U = Z + 2c$, we get $\dGH(X,U) \leq \delta + c$ by Lemma~\ref{lemma: dist_to_image}, $\ e(U) \geq \delta/3$, $\s(U) \geq \delta/3 + 2c$, $\t(U) \geq 2c$.
\end{proof}
Thus spaces in general position are everywhere dense in $\ca{GH}$.

\end{document}